\numberwithin{equation}{section}
\theoremstyle{plain}
\newtheorem{Th}{Theorem}[section]
 \theoremstyle{definition}
\newtheorem{?}[Th]{Problem}
\renewcommand{\Re}{\operatorname{Re}}
  \newcommand{\ab}[1]{\left\vert{#1}\right\vert}
 \newcommand{\zj}[1]{\left({#1}\right)}
\newcommand{\eq}[1]{\eqref{#1}}
\begin{document}

\title{Average Goldbach and the Quasi-Riemann hypothesis
}
\author{Gautami Bhowmik}
\address{Laboratoire Paul Painlev\'e\\
Labex-cempi, Universit{\'e} Lille 1, 59655
Villeneuve d'Ascq Cedex, France
}
\email{bhowmik@math.univ-lille1.fr}

\author{Imre Z. Ruzsa}
\address{Alfr\'ed R\'enyi Institute of Mathematics\\
     Budapest, Pf. 127\\
     H-1364 Hungary
}
\email{ruzsa@renyi.hu}
 \thanks{The second author was supported by ERC--AdG Grant No.321104  and
 Hungarian National Foundation for Scientific
 Research (OTKA), Grants No.109789  
 and NK104183.}           
\keywords{Goldbach problem, Riemann hypothesis,
Chebyshev Function, Dirichlet Kernel}

 \subjclass[2010]{11P32, 11M26}
    \begin{abstract}
    We prove that a good average order on the Goldbach generating function implies that the real parts of the non-trivial zeros of the Riemann zeta function
    are strictly less than 1. This together with existing results establishes an equivalence between such asymptotics and the Riemann Hypothesis.
     \end{abstract}

     \maketitle

\section{Introduction} The  relation between the average order of the Goldbach generating function and the Riemann Hypothesis was introduced by Granville \cite{Gra07}.
In an attempt to generalise such relations to sums of primes in arithmetic progressions it was observed that  asymptotic results on the average number of restricted
representations of integers   assuming a conjecture on distinct zeros of Dirichlet L-functions 
did not obviously yield an equivalence with the generalised Riemann Hypothesis since the case of the real part of non-trivial zeros being exactly 1 could not 
be eliminated. In this note we succeed in doing so for the Riemann zeta function in a simple way and establish an equivalence of average orders of the original Goldbach function with what we call a quasi-Riemann hypothesis, that the real parts of the non-trivial zeros of of the Riemann zeta function 
are    strictly less than $1$. This completes the proof of Theorem 1A in \cite{Gra07} i.e. a good asymptotic order 
of the Goldbach function is indeed equivalent to the Riemann Hypothesis. The same idea has since been applied  to the restricted sums of two primes both in the same congruence class (Section 8 \cite{BHMS}).

The average order of the function \[ G(n) = \sum_{k_1+k_2=n} \Lambda(k_1) \Lambda(k_2) , \] where $\Lambda$ is the von Mangoldt function 
was studied by Fujii \cite{Fuj91} and Granville (op.cit) who showed that under the Riemann Hypothesis  the summatory function can be expressed as
\begin{equation}
 S(x) = \sum_{n\leq x} G(n)
        = \frac{x^2}{2} + O(x^{3/2}) . \end{equation}
The error term was  eventually improved to its conjectured value $O(x^{1+\varepsilon})$ \cite{B_SP} but here it is enough for us to assume any error term less than $x^2$ and from the square root of the associated power series  estimate well the  Chebyshev function,
$\psi(x)=\sum_{n\leq x}\Lambda(n)$,   using a  Dirichlet kernel.

\section {Quasi-Riemann Hypothesis}
  \begin{Th}
  If the asymptotic relation 
  \begin{equation}\label{ass}
        S(x) = \frac{x^2}{2} + O(x^{2-\delta}), \ \delta>0  
        \end{equation}
holds, then  there exists $0< \delta'  <1  $ such that 
 for  the non-trivial zeros $\rho$ of the Riemann zeta function $ \Re \rho < 1- \delta'$.
\end{Th}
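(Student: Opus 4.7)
My plan is to convert~\eqref{ass} into an estimate on the real generating function $F(z):=\sum_{n\ge 1}\Lambda(n)z^n$, and then to feed that estimate into the Mellin identity for $-\zeta'/\zeta$, analytically continuing the latter to a half-plane $\Re s>1-\delta'$ and thus directly exhibiting the required zero-free strip.

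Since $F(z)^2=\sum_n G(n)z^n$, Abel summation yields for $\alpha>0$
\[
F(e^{-\alpha})^2 \;=\; \alpha\int_0^\infty S(t)e^{-\alpha t}\,dt.
\]
Substituting $S(t)=t^2/2+O(t^{2-\delta})$ and computing the relevant Laplace transforms gives $F(e^{-\alpha})^2=\alpha^{-2}(1+O(\alpha^\delta))$. Crucially, $F(e^{-\alpha})$ is positive for real $\alpha>0$, so the positive square root produces
\[
F(e^{-\alpha}) \;=\; \frac{1}{\alpha}+O(\alpha^{\delta-1}) \qquad (0<\alpha\le 1).
\]

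Next, insert this into the Mellin identity
\[
\Gamma(s)\Bigl(-\frac{\zeta'(s)}{\zeta(s)}\Bigr) \;=\; \int_0^\infty F(e^{-\alpha})\,\alpha^{s-1}\,d\alpha \qquad (\Re s>1),
\]
split the integral as $\int_0^1+\int_1^\infty$, and note that the tail is entire in $s$ because $F(e^{-\alpha})$ decays exponentially at infinity. For the remaining piece, write $F(e^{-\alpha})=\alpha^{-1}+\bigl(F(e^{-\alpha})-\alpha^{-1}\bigr)$: the first summand contributes $\int_0^1\alpha^{s-2}d\alpha=1/(s-1)$, and the second, thanks to the square-root estimate, contributes an integral whose integrand is bounded in absolute value by $C\alpha^{\delta+\Re s-2}$, hence converges absolutely and defines a holomorphic function on $\Re s>1-\delta$. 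Consequently $\Gamma(s)\cdot(-\zeta'/\zeta(s))$ continues meromorphically to $\Re s>1-\delta$ with a unique simple pole at $s=1$, and dividing by the non-vanishing factor $\Gamma(s)$ shows that $-\zeta'/\zeta(s)$ itself is meromorphic there with that same single pole. Any zero of $\zeta$ in this half-plane would force an additional pole of $-\zeta'/\zeta$, so none exists, and the theorem holds with any $\delta'<\min(\delta,1)$.

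The whole argument rests on two points: the positivity of $F(e^{-\alpha})$ on the positive real axis, which legitimises extracting the square root, and the absolute convergence of the Mellin integral after the main term $\alpha^{-1}$ has been subtracted off, which is precisely what the square-root estimate is designed to supply. An alternative route, suggested by the introduction and apparently pursued in the paper, uses a Dirichlet kernel to convert the estimate $F(e^{-\alpha})=1/\alpha+O(\alpha^{\delta-1})$ into a pointwise bound of the form $\psi(x)=x+O(x^{1-\delta'})$ and concludes via the standard Mellin identity relating $\psi$ and $-\zeta'/\zeta$; this is logically equivalent but passes explicitly through $\psi$, and the pointwise recovery of $\psi$ from a Laplace-transform bound on the real axis is where I would expect the main technical effort of that approach to lie.
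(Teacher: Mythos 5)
Your argument is correct, but it follows a genuinely different route from the paper's. The paper works on the whole circle $|z|=1-1/N$: it can only extract the square root of $F(z)^2$ on the short major arc $|1-z|<N^{\delta/3-1}$ where the error term is dominated by the main term, and it must then combine a Dirichlet kernel with Cauchy--Schwarz and a Parseval-type bound on the minor arc to recover the pointwise estimate $\psi(N)=N+O(N^{1-\delta/6}(\log N)^{1/2})$, from which the zero-free half-plane follows by the classical converse theorem. You instead stay on the positive real axis, where the positivity of the coefficients $\Lambda(n)$ makes the square root global and painless, and you feed the one-variable estimate $F(e^{-\alpha})=\alpha^{-1}+O(\alpha^{\delta-1})$ directly into the Mellin representation of $\Gamma(s)(-\zeta'(s)/\zeta(s))$, never passing through $\psi$ at all. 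Your route is shorter, dispenses with the arc decomposition and the $L^2$ estimate, and yields the essentially optimal exponent (any $\delta'<\min(\delta,1)$, versus the paper's $\delta/6$, improvable to $\delta/3$ per their Remark 1 --- the authors explicitly say they cannot reach $\delta'=\delta$ by their kernel method). What the paper's longer route buys is the pointwise bound on $\psi(N)$ itself, a strictly stronger intermediate statement of independent interest which a purely Abelian real-axis argument like yours cannot produce, and which is the form relevant to the generalizations discussed in their remarks. Two small points to make explicit when writing yours up: the expansion of the square root of $1+O(\alpha^{\delta})$ is only legitimate for $\alpha$ below a fixed $\alpha_0$ where that error is at most $1/2$, the range $[\alpha_0,1]$ being handled trivially since there both $F(e^{-\alpha})$ and $\alpha^{-1}$ are $O(1)=O(\alpha^{\delta-1})$; and one may assume $\delta\le 1$ throughout without loss of generality.
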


\begin{proof}
We consider the power series
 \[ F(z) = \sum _{n=1}^\infty \Lambda(n) z^n, \ |z| <1  \]
whose square is written in terms of the Goldbach function as
  \[ F(z)^2 = \sum_{n=1}^\infty G(n)z^n .\]
 Writing the above as a telescoping series with $G(n)=S(n)-S(n-1)$ we get $$F(z)^2= (1-z) \sum_{n=1}^\infty S(n)z^n.$$
  From the assumption \eq{ass} on the Goldbach asymptotic order we infer
 
   \begin{equation}\label{asym_sum}
   \sum_{n=1}^\infty  S(n)z^n = \sum_n \left(\frac{n^2}{2} + O(n^{2-\delta}) \right)z^n
   =\sum_n\frac{n^2}{2}z^n+O\left(\sum_n n^{2-\delta}|z|^n\right).
   \end{equation}

   We have
    \[ \sum_n \frac{n^2}{2} z^n =  \frac{1}{(1-z)^3} - \frac{3}{2(1-z)^2} +\frac{1}{2(1-z)} ,\] 
 while the error term of (\ref{asym_sum} ) is estimated as
$$\sum_{n=1}^\infty n^{2-\delta}|z|^n \ll (1-|z|)^{\delta-3}  $$ 
by comparison with the power series expansion of the right hand side 
and thus
 $$\sum_n S(n)z^n= \frac{1}{(1-z)^3} +O\left(\frac{1}{(1-|z|)^{3-\delta}  }\right) $$
which implies
 $$F(z)^2= (1-z) \sum_{n=1}^\infty S(n)z^n =  \frac{1}{(1-z)^2}
    + O\zj{ \frac{|1-z|}{(1- |z| )^{3-\delta} }}.$$

To estimate $\psi(N)$ we consider the circle $|z| =R= 1-1/N$ for a large positive integer $N$ and rewrite the last formula as
\begin{equation}\label{square}
F(z)^2 = \frac{1}{(1-z)^2} + O\bigl( |1-z| N^{3-\delta} \bigr).
\end{equation}

The error term is less than the absolute value of the  main term for
\begin{equation}\label{major}  |1-z| <  N^ {\frac{\delta}{3}-1}, \end{equation}
and in this range we take the complex square root of \eq{square} to obtain
\begin{equation}\label{root}
F(z) = \frac{1}{1-z} + O\bigl( |1-z|^2 N^{3-\delta} \bigr).
\end{equation}

\underline {Kernel.} We introduce the function
$$K(z)=z^{-N-1}\frac{1-z^N}{1-z}.$$
We have $K(z)\ll |1-z|^{-1}$ and  we  use it to estimate the partial sums
 \begin{equation}\label{kernel}
 \psi(N) = \frac{1}{2\pi i} \int_{|z|=R}  F(z)K(z) dz \\
= N + \frac{1}{2\pi i} \int_{|z|=R}\left(F(z)-\frac{1}{1-z}\right)K(z) dz . 
  \end{equation}
  
\underline{Major Arc.} On the part of the circle \eq{major} the integrand is
\[  O\bigl( |1-z| N^{3-\delta} \bigr) = O\bigl(  N^{2-2\delta/3} \bigr)\]
so the contribution to the integral above is $ O\bigl(  N^{1-\delta/3} \bigr)$
from the power series estimate obtained in (\ref{root}).

\smallskip
\underline {Minor Arc.} On the rest of the circle, i.e. the arc $|1-z|>N^{\delta/3-1}$ we use the Cauchy-Schwarz inequality.
For $F$ we apply 
the  estimate for the full circle

\begin{equation*}
\begin{split}
\int_{|z|=R} \ab{F(z)-\frac{1}{1-z}}^2 |dz|
&=\int_0^{2\pi}\sum_n(\Lambda(n)-1)R^ne^{int}\sum_m(\Lambda(n)-1)R^ne^{-imt}\\
&=2\pi \sum_{n=1}^\infty (\Lambda(n)-1)^2 \zj{1-\frac{1}{N}}^{2n} = O(N \log N).
\end{split}
\end{equation*}
We estimate the {\it square integral of the kernel} as follows.
\[
I= \int_{\substack{|z|=R \\ |1-z|>N^{\delta/3-1}}} |K(z)|^{2} |dz| \ll 
   \int_{\substack{|z|=R \\ |1-z|>  N^{\delta/3-1} } }  \frac{ |dz| }{|1-z|^{2}}.
\]
With the parametrization $ z=Re^{it}$ the last integral becomes
 \[ 2\int_{t_0}^\pi \frac{1}{ |1-Re^{it} |^2} dt, \]
where $t_0$ is defined by $ |1-Re^{it_0} | =  N^{\delta/3-1} $.

We have
 \[  |1-Re^{it} |^2 = 1 + R^2 - 2R \cos t = (1-R)^2 + 4R^2 \sin^2 \frac{t}{2} \
\begin{cases} < t^2 + N^{-2} \\ > t^2/3, \end{cases}  \]
whence $t_0 \gg  N^{\delta/3-1} $ and the integral satisfies
 \[ I \ll \int_{t_0}^\pi t^{-2} \, dt < t_0^{-1} \ll N^{1-\delta/3} . \]

Now the Cauchy Schwarz inequality gives the bound for the
 minor arc as
\[
  \Big( \int_{|z|=R} \Big|F(z)-\frac{1}{(1-z)}\Big|^{2}dz
  \Big)^{1/2} I^{1/2} 
  \ll  N^{1-\delta/6} ( \log N)^{1/2} .
\]
 The major and minor arcs together give
\[
  \psi(N)-N \ll  N^{1-\delta/6} ( \log N)^{1/2} ,
\]
which proves the theorem with $\delta'=\delta/6$.

   \end{proof}
\section{Remarks}
 1. Using a better kernel and a more careful calculation we can improve the bound to $\delta'=\delta/3$. It is of little importance in this context 
as the methods of [1,2,4] give $\delta'=\delta$ as soon as we have $\delta'<1$, through an explicit expression of $S(x)$ using roots of the zeta-function.
 We are unable to get $\delta'=\delta$ directly by the simple method of this paper and we think it cannot be done. We used very little of the properties
 of the Liouville function and our method works equally for any function $f$ satisfying $ |f(n)| = O(n^\varepsilon)$ for all $\varepsilon>0$ and
  \[ \sum_{m+n\leq x} f(m)f(n) = cx^2 + O(x^{2-\delta}) .   \]
We doubt that
\[ \sum_{n\leq x} f(n) = c'x + O(x^{1-\delta+o(1)})  \]
follows in this generality.

2. In general we are yet unable to establish an equivalence between average orders of
restricted Goldbach functions and  appropriate quasi Riemann Hypotheses for $L$-functions of \cite{Gra07} or \cite{BHMS} though a particular case is handled
in [1] using the method of this note.


\begin{thebibliography}{9}
    

\bibitem{BHMS}
G.Bhowmik, K. Halupczok,  K. Matsumoto and Y.Suzuki,
Goldbach Representations in Arithmetic Progressions and zeros of Dirichlet $L$-functions,
To appear in Mathematika, 2018 ,  arXiv:1704.06103v1.

\bibitem{B_SP}
G. Bhowmik and J.-C. Schlage-Puchta,
Mean representation number of integers as the sum of primes,
Nagoya Math. J. {\bf 200} (2010), 27--33.








\bibitem{Fuj91}A. Fujii,
An additive problem of prime numbers,
Acta Arith. {\bf 58} (1991), 173--179.



\bibitem{Gra07}
A. Granville,
Refinements of Goldbach's conjecture, and the generalized Riemann hypothesis,
Funct. Approx. Comment. Math. {\bf 37} (2007), 159--173;
Corrigendum, ibid. {\bf 38} (2008), 235--237.






\bibitem{MV}
H. L. Montgomery and R. C. Vaughan,
Multiplicative Number Theory I, Classical Theory,
Cambridge, 2007.






      \end{thebibliography}
     \end{document}